\author{Ievgen~V.~Bondarenko, Rostyslav~V.~Kravchenko\footnote{This author was supported by the NSF grant number 0503688 and the ERC starting grant GA 257110 ТRaWGУ.}}
\title{\textbf{Finite-state self-similar actions of nilpotent groups}}
\newcommand{\nucl}{\mathcal{N}}
\newcommand{\srL}{\mathscr{L}}
\newcommand{\mbQ}{\mathbb{Q}}
\newcommand{\phiL}{\widehat{\phi}}
\newtheorem{theorem}{Theorem}
\newtheorem{proposition}[theorem]{Proposition}
\newtheorem{lemma}{Lemma}
\theoremstyle{definition}
\begin{document}

\maketitle

\begin{abstract}
Let $G$ be a finitely generated torsion-free nilpotent group and $\phi:H\rightarrow G$ be a surjective
homomorphism from a subgroup $H<G$ of finite index with trivial $\phi$-core. For every choice of coset
representatives of $H$ in $G$ there is a faithful self-similar action of the group $G$ associated with
$(G,\phi)$. We are interested in what cases all these actions are finite-state and in what cases there
exists a finite-state self-similar action for $(G,\phi)$. These two
properties are characterized in terms of the Jordan normal form of the corresponding automorphism $\phiL$ of the Lie algebra of the Mal'cev completion of $G$.\\

\noindent \textbf{Mathematics Subject Classification 2000}: 20F65, 20F18

\vspace{0.1cm}\noindent \textbf{Keywords}: self-similar action, nilpotent group,
finite-state action, automaton group

\end{abstract}

\section{Introduction}

An action of a group $G$ on the set $X^{*}$ of words over a finite alphabet $X$ is called
\textit{self-similar} if for every $x\in X$ and $g\in G$ there exist $y\in X$ and $h\in G$ such that
$g(xv)=yh(v)$ for all words $v\in X^{*}$. Every self-similar action of a group on $X^{*}$ implies the
action of the group by automorphisms on the $|X|$-regular rooted tree with vertex set $X^{*}$ and edges
$(v,vx)$ for all $v\in X^{*}$, $x\in X$. In terms of automorphisms of rooted trees, self-similarity of
the action means that the action of every element of the group ``inside'' every $|X|$-regular rooted
subtree can be represented by an element of the group. Self-similar group actions appear naturally in
many areas of mathematics and have applications to holomorphic dynamics, fractal geometry,
combinatorics, automata theory, etc. (see \cite{self_sim_groups} and the references therein).

The theory of self-similar group actions can be regarded as the study of positional numeration systems
on groups. Bases in these number systems are virtual endomorphisms of groups. A \textit{virtual
endomorphism} of a group $G$ is a homomorphism $\phi:H\rightarrow G$ from a subgroup $H<G$ of finite
index to $G$. To produce a self-similar action of the group $G$ with ``base'' $\phi$ we need to choose
a set $D$ of coset representatives of $H$ in $G$ called a \textit{digit set}. Let us enumerate the
elements of $D$ by the letters of an alphabet $X$ (here $|X|=[G:H]$ and $D=\{q_x, x\in X\}$). The
\textit{self-similar action} of $G$ on the space $X^{*}$ \textit{associated to the triple} $(G,\phi,D)$
is constructed as follows. Every element of the group stabilizes the empty word. For every $x\in X$ and
$v\in X^{*}$ the action of an element $g\in G$ is defined recursively by the rule
\begin{equation}\label{eq_action_given_by_virt_end}
g(xv)=yh(v) \ \mbox{ with } \ h=\phi(q_y^{-1}gq_x),
\end{equation}
where $y\in X$ is the unique letter such that $q_y^{-1}gq_x\in H$. The constructed action may be not
faithful. The kernel of the action does not depend on the choice of the set $D$ and is equal to the
maximal normal $\phi$-invariant subgroup of $G$ called the \textit{$\phi$-core}
(\cite[Proposition~2.7.5]{self_sim_groups}).



Conversely, one can associate a virtual endomorphism  with every faithful self-similar action
$(G,X^{*})$ as follows. The element $h$ from the definition of self-similar action is called the
\textit{state} of $g$ at $x$ and is denoted by $g|_x$ (this element is unique if the action is
faithful); iteratively we define the state of $g$ at every word by the rule $g|_{x_1x_2\ldots
x_n}=g|_{x_1}|_{x_2}\ldots |_{x_n}$. For every letter $x\in X$ the stabilizer $St_G(x)$ has finite
index in $G$ and then the map $\phi_x:St_G(x)\rightarrow G$ given by $\phi_x(g)=g|_x$ is a virtual
endomorphism of $G$. If in addition the action $(G,X^{*})$ is \textit{self-replicating} (recurrent),
i.e., $\phi_x$ is surjective and $G$ acts transitively on $X$, then $(G,X^{*})$ can be given by the
triple $(G,\phi_x,D)$ for some choice of the digit set~$D$.

As a simple example, consider the group $\mathbb{Z}$ with the homomorphism
$\phi:2\mathbb{Z}\rightarrow\mathbb{Z}$, $\phi(2a)=a$, and choose the digit set $D=\{0,1\}$, which is
also used as an alphabet with a slight abuse in notations. The associated self-similar action
corresponds to the binary number system on $\mathbb{Z}$. We have $a|_{x_1x_2\ldots x_n}=b$ and
$a(x_1x_2\ldots x_n)=y_1y_2\ldots y_n$ for $a,b\in\mathbb{Z}$ if and only if
\[
a=(y_1-x_1)+2(y_2-x_2)+2^2(y_3-x_3)+\ldots+2^{n-1}(y_n-x_n)+2^nb.
\]
In particular, if $a|_{00\ldots 0}=0$ then the image $a(00\ldots 0)$ is the usual binary expansion of
$a$.



Self-similar group actions are closely related to groups generated by automata. Groups generated by
finite automata correspond to finite-state self-similar actions of finitely generated groups. Recall
that a faithful self-similar action $(G,X^{*})$ is called \textit{finite-state} if for every $g\in G$
the set of its states $\{g|_v : v\in X^{*}\}$ is finite. Then a finitely generated group has a faithful
finite-state self-similar action if and only if it can be generated by a finite automaton. One of the
fundamental questions in this theory is what groups possess finite-state self-similar actions, i.e.,
can be realized by finite automata. This property was proved for free abelian groups $\mathbb{Z}^n$
\cite{1/2end}, Grigorchuk group \cite{GNS:ADG}, $GL_n(\mathbb{Z})$ \cite{GlnZ}, lamplighter group
\cite{GZ:lampl}, free groups and free products of cyclic groups of order $2$ \cite{free_gr},
Baumslag-Solitar groups $B(1,m)$ \cite{bs:solvable}, certain nilpotent groups \cite{virt_nilp}, etc.

The finite-state self-similar actions of $\mathbb{Z}^n$ can be characterized in term of the associated
virtual endomorphism as shown by Nekrashevych and Sidki in \cite{1/2end} (see also
\cite[Theorem~2.12.1]{self_sim_groups}). A virtual endomorphism of $\mathbb{Z}^n$ is uniquely extended
to a linear operator of $\mathbb{R}^n$. Then a faithful self-replicating self-similar action of
$\mathbb{Z}^n$ with virtual endomorphism $\phi$ is finite-state if and only if the spectral radius of
$\phi$ is less than $1$. In particular, there is no dependence on the choice of coset representatives.

In this paper we consider self-similar actions of finitely generated torsion-free nilpotent groups. The
main goal is to generalize the above mentioned result of Nekrashevych and Sidki. However, self-similar
actions of nilpotent groups have a new level of complexity comparing to the actions of abelian groups.
For example, a nilpotent group with fixed virtual endomorphism may have a faithful finite-state
self-similar action for one choice of coset representatives and be not finite-state for another choice
(see example with Heisenberg group in Section~\ref{Section_Example}). Hence we need to answer two
questions: Under what conditions on $\phi$ does there exist a finite-state action for $(G,\phi)$? Under
what conditions does every action for $(G,\phi)$ finite-state?

Let $G$ be a finitely generated torsion-free nilpotent group with surjective virtual endomorphism
$\phi:H\rightarrow G$. Since we are interested in faithful self-similar actions of the group $G$, we
assume that $\phi$-core is trivial. One can check the triviality of $\phi$-core by the following
statement.
\begin{proposition}[Corollary~1 in \cite{virt_nilp}]\label{prop_Sidki}
Let $G,H,\phi$ be as above. If $\phi$-core is trivial, then $\phi$ is injective. If $\phi$ is
injective, then $\phi$-core is trivial if and only if the virtual endomorphism $\phi|_{Z(H)}:Z(H)\to
Z(G)$ of the center $Z(G)$ of the group $G$ has trivial core.
\end{proposition}
The proposition implies that $\phi$ is also injective and hence it is an isomorphism. The same
proposition says that if we know that $\phi$ is an isomorphism then $\phi$-core is trivial if and only
if $\phi|_{Z(H)}$-core is trivial. Since $Z(G)$ is abelian, by
\cite[Proposition~2.9.2]{self_sim_groups} the $\phi|_{Z(H)}$-core is trivial if and only if no
eigenvalue of $\phi|_{Z(H)}$ is an algebraic integer, which can be effectively checked.

By a theorem of Mal'cev (see \cite{malcev} and the next section) there exists the unique real nilpotent
connected and simply connected Lie group $L$, {\it Mal'cev completion} of $G$, such that the group $G$
is a discrete subgroup of $L$ and the topological space $L/G$ is compact. Since $H$ is a subgroup of
finite index, the isomorphism $\phi:H\rightarrow G$ lifts to an automorphism of the Lie group $L$ also
denoted by $\phi$. Let $\mathscr{L}$ be the Lie algebra of $L$ and denote by $\phiL$ the differential
of $\phi$ at the identity, which is an automorphism of $\mathscr{L}$. Then the existence of
finite-state self-similar action of the group $G$ can be characterized in terms of the Jordan normal
form of $\phiL$.

\begin{theorem}\label{thm_main1}
Let $G$ be a finitely generated torsion-free nilpotent group. Let $(G,X^{*})$ be a faithful
self-replicating self-similar action with virtual endomorphism $\phi$ (associated to some letter $x\in
X$). If the action $(G,X^{*})$ is finite-state then the spectral radius of $\phiL$ is not greater than
$1$ and for every eigenvalue of modulus $1$ the associated Jordan cells in the Jordan normal form of
$\phiL$ have size~$1$. Conversely, if the automorphism $\phiL$ satisfies the previous condition then
there exists a finite-state self-similar action of $G$ with virtual endomorphism~$\phi$.
\end{theorem}

One can restate the theorem as follows. Let $\phi$ be a surjective virtual endomorphism of $G$ with
trivial core. There exists a digit set $D$ with $e\in D$ such that the self-similar action associated
to $(G,\phi,D)$ is finite-state if and only if the Jordan normal form of $\phiL$ satisfies the
condition in the theorem.

\begin{theorem}\label{thm_main2}
Let $G$ be a finitely generated torsion-free nilpotent group, and let $\phi$ be a surjective virtual
endomorphism of $G$ with trivial core. Every self-similar action of $(G,\phi)$ is finite-state if and
only if the spectral radius of $\phiL$ is less than $1$.
\end{theorem}

In particular, if the Jordan normal form of $\phiL$ satisfies the condition in Theorem~\ref{thm_main1}
and $\phiL$ has an eigenvalue of modulus $1$, then the pair $(G,\phi)$ possesses both finite-state and
non-finite-state self-similar actions. This situation cannot happen for abelian group $\mathbb{Z}^n$,
because if a virtual endomorphism of $\mathbb{Z}^n$ viewed as a linear map has an eigenvalue of modulus
$1$ then it has a non-trivial core.

We should also note that while in this paper we are dealing with torsion free nilpotent groups by
viewing them as lattices in their Mal'cev completions, Michael Kapovich in \cite{kap} has considered
the case of a lattice in linear semisimple algebraic Lie group and proved that it admits a faithful
self-similar action if and only if it is virtually isomorphic to an arithmetic lattice. It is an
interesting open question when such a self-similar action is finite-state (see
\cite[Question~17]{kap}).

\section{Preliminary information about nilpotent Lie groups}

In this section we remind some facts from the theory of nilpotent Lie groups which we will need for the
proof. Let $L$ be a connected and simply connected real nilpotent Lie group and $\mathscr{L}$ be its
Lie algebra. The exponential map $\exp: \mathscr{L}\to L$ is a diffeomorphism and one can define $\log:
L\to \srL$ as the inverse of~$\exp$.

\vspace{0.2cm}\textbf{Uniform subgroups.} We say that $\srL$ has a \textit{rational structure} if there
is a rational Lie subalgebra $\srL_\mbQ$ such that $\srL=\srL_\mbQ\otimes\mathbb{R}$. A subgroup $G$ of
$L$ is called {\it uniform} if it is discrete and $L/G$ is compact. The next theorem connects these two
notions.
\begin{theorem}\label{bookthm1}(Theorem~5.1.8 (a) in \cite{repr_nilpotent_Lie})
Let $L$ be a nilpotent Lie group with Lie algebra $\mathscr{L}$. If $L$ has a uniform subgroup $G$,
then $\mathscr{L}$ has a rational structure such that $\srL_{\mbQ}=\mathbb{Q}\text{-span}(\log G)$.
\end{theorem}
This theorem allows us, given a uniform subgroup $G$ of $L$, to fix the rational structure on $\srL$ by
defining $\srL_{\mbQ}=\mathbb{Q}\text{-span}(\log G)$. The next theorem tells when two different
uniforms subgroups define the same rational structure.
\begin{theorem}\label{bookthm2}(Theorem~5.1.12 in \cite{repr_nilpotent_Lie})
Let $G_1$, $G_2$ be uniform subgroups in a nilpotent Lie group $L$. Then $G_1$, $G_2$ determine the
same rational structure in $\srL$ if and only if they are commensurable: $G_1\cap G_2$ of finite index
in both $G_1$ and $G_2$.
\end{theorem}
We will often consider subgroups of $G$ of finite index in $G$. It is clear they are still uniform
subgroups of $L$, hence the above theorem tells that such a subgroup defines the same rational
structure as $G$.

A uniform subgroup $G$ of $L$ is called a {\it lattice subgroup} if $\log G$ is an additive subgroup of
$\srL$.
\begin{theorem}\label{bookthm3}(Theorem~5.4.2 (a) in \cite{repr_nilpotent_Lie})
Let $G$ be a uniform subgroup of a nilpotent Lie group $L$, then $G$ contains a lattice subgroup $G_0$
of finite index.
\end{theorem}

\vspace{0.2cm}\textbf{Mal'cev completion.} Given a finitely generated torsion-free nilpotent group $G$,
Mal'cev completion embeds $G$ as a uniform subgroup into connected and simply connected real nilpotent
Lie group. This completion can be briefly described as follows. Since $G$ is nilpotent and torsion
free, it is possible to add subgroups to its upper central series so that the obtained series, say
$\{e\}=G_1\leq\dots \leq G_{m+1}=G$, has infinite cyclic factors. By fixing a choice of $a_i\in G$ such
that $a_i G_i$ generates $G_{i+1}/G_i$ for $i=1,\dots, m$ we can write any $g\in G$ in the form
$g=a_{m}^{t_{m}}\dots a_1^{t_1}$ for some uniquely defined $t_i\in\mathbb{Z}$, called {\it Mal'cev
coordinates} of $g$. Let $f:\mathbb{Z}^m\times\mathbb{Z}^m\to\mathbb{Z}^m$ and
$g:\mathbb{Z}^m\to\mathbb{Z}^m$ be the maps that correspond to the product and taking inverse in $G$ in
terms of coordinates. It can be shown that $f,g$ are polynomial, and hence have a natural extension
from $\mathbb{Z}^m$ to $\mathbb{R}^m$. Moreover, they still satisfy the same laws, and hence define a
nilpotent Lie group structure on $\mathbb{R}^m$ such that the subset $\mathbb{Z}^m\subset\mathbb{R}^m$
with induced group operations is a subgroup isomorphic to $G$.

\vspace{0.2cm}\textbf{Left invariant metrics on Lie groups}. We recall the following construction of
the left invariant Riemannian metrics on a Lie group $L$. Let $T_g L$ be the tangent space to $L$ at
the point $g$, and recall that $\srL=T_e L$.  Denote by $l_g:T_e L\to T_g L$ the differential at the
point $e$ of the map $x\mapsto gx$. Then any euclidean norm on $\srL=T_e L$ gives rise to a left
invariant Riemannian metric on $L$ by the rule $\|w\|=\|l_g^{-1}w\|$ for any $g\in L$ and $w\in T_g L$.
Then the length of a differentiable curve $f(t)\in L$, $t\in [0,1]$ is defined as $\int_0^1 \|f'(t)\| d
t$, and the distance $d(g,g')$ between $g,g'\in L$ is the infimum of lengths of all curves that connect
$g$ to $g'$. Note that the length of the curve $f(t)$ is the same as the length of the shifted curve
$g_0f(t)$ for any $g_0\in L$, and hence $d(g_0g,g_0g')=d(g,g')$.

\section{Proof of Theorems~\ref{thm_main1} and \ref{thm_main2}}
Recall that we have a finitely generated torsion-free nilpotent group $G$ and an isomorphism $\phi:H\to
G$ from a subgroup $H<G$ of finite index with trivial $\phi$-core. The Mal'cev completion $L$ of $G$ is
a nilpotent Lie group, connected and simply connected, and $G$ is a uniform subgroup of it. Thus all
the theorems from the previous section apply. Note also that since $H$ is a subgroup of finite index in
$G$, it is also a uniform subgroup of $L$. Therefore $L$ is also Mal'cev completion of $H$, by the
uniqueness property of Mal'cev completion. The isomorphism $\phi:H\to G$ lifts to a diffeomorphic
automorphism of $L$ and its differential $\phiL$ at $e$ is an automorphism of Lie algebra $\srL$. Under
these notations we have
\begin{equation}
\label{commi} \phiL(\log(g))=\log(\phi(g))\mbox{ for all }g\in{}L.
\end{equation}

Before proving the theorems, let us show first that if $\phiL$ is contracting, i.e., its spectral
radius is less than one, then the associated self-similar actions of $G$ (independently on the choice
of coset representatives) has a special contracting property. Namely, a self-similar action $(G,X^{*})$
is called \textit{contracting} if there exists a finite set $\nucl\subset G$ with the property that for
every $g\in G$ there exists $n\in\mathbb{N}$ such that $g|_{v}\in\nucl$ for all words $v\in X^{*}$ of
length $\geq n$. Contracting property of a self-similar action doesn't depend on the choice of coset
representatives and can be characterized just in terms of the associated virtual endomorphism (see
\cite[Proposition~2.11.11]{self_sim_groups}).

\begin{proposition}\label{prop_contracting}
If $\phiL$ is contracting, then every self-similar action of $(G,\phi)$ is contracting.
\end{proposition}
\begin{proof}
Let $(G,X^{*})$ be a self-similar action associated to the triple $(G,\phi,D)$. Take any element $g\in
G$ and for every word $x_1x_2\ldots x_n\in X^{*}$ consider the state
\begin{eqnarray}\label{eqn_proof_prop_g_x1x2=}
g|_{x_1x_2\ldots x_n}&=&\phi(q^{-1}_{y_n}\ldots \phi(q^{-1}_{y_2} \phi(q^{-1}_{y_1} g q_{x_1}) q_{x_2})
\ldots q_{x_n})\\ &=& \phi(q^{-1}_{y_n}) \ldots \phi^{n-1}(q^{-1}_{y_2}) \phi^n(q^{-1}_{y_1})\
\phi^n(g)\ \phi^{n}(q_{x_1}) \phi^{n-1} (q_{x_2}) \ldots \phi(q_{x_n}),\nonumber
\end{eqnarray}
where $y_1y_2\ldots y_n=g(x_1x_2\ldots x_n)$. We use the following general lemma:

\begin{lemma}
Let $\lambda>0$ be bigger than the spectral radius of $\phiL:\srL\to\srL$. Then the topology on $L$ is
induced by a left invariant metric $d:L\times L\to\mathbb{R}_+$ such that $d(\phi(g),\phi(g'))\leq
\lambda d(g,g')$ for all $g,g'\in L$.
\end{lemma}
\begin{proof}
It is easy to see that there is an euclidian norm on $\srL$ such that $\|\phiL(v)\|\leq\lambda\|v\|$
for any $v\in\srL$. Then there is a unique left invariant Riemannian metric on $L$, induced by this
norm. Moreover, by left invariance, for any vector $w$ tangent to $L$ we have that
$\|\phiL(w)\|\leq\lambda\|w\|$. Integrating, we obtain that $d(\phi(g),\phi(g'))\leq \lambda d(g,g')$
for all $g,g'\in L$.
\end{proof}
By the lemma there is $\lambda<1$ and a left-invariant metric $d$ on $L$ such that
 \begin{equation*}
 \begin{aligned}
& d(e,\phi^{n}(q_{x_1}) \phi^{n-1} (q_{x_2}) \ldots \phi(q_{x_n}))\leq\\ \leq\ &
d(e,\phi^n(q_{x_1}))+d(\phi^n(q_{x_1}),\phi^n(q_{x_1})\phi^{n-1}(q_{x_2}))+\dots\\&
\ldots +d(\phi^{n}(q_{x_1}) \phi^{n-1} (q_{x_2}) \ldots \phi^2(q_{x_{n-1}}),\phi^{n}(q_{x_1}) \phi^{n-1} (q_{x_2}) \ldots \phi(q_{x_n}))\leq\\
\leq\ & \lambda^n d(e,q_{x_1})+\dots+\lambda d(e,q_{x_n})\leq\frac{\lambda}{1-\lambda}\max_{i}
d(e,q_{x_i})=:R.
\end{aligned}
 \end{equation*}
Hence the set of all products of the form $\phi^{n}(q_{x_1}) \phi^{n-1} (q_{x_2}) \ldots \phi(q_{x_n})$
belongs to a ball of finite radius $R$ in $L$, which is a compact set. Define the finite subset
$\nucl\subset G$ as the set of all group elements in the ball of radius $2R+1$ around identity in $L$.
Since $\phiL$ is contracting, $d(e,\phi^n(g))\to 0$ as $n\to\infty$ and the product in
(\ref{eqn_proof_prop_g_x1x2=}) belongs to the ball of radius $2R+1$ for all large enough
$n\in\mathbb{N}$. Hence $g|_{x_1x_2\ldots x_n}\in\nucl$ and the action is contracting.
\end{proof}

It is obvious from definition that every contracting action is finite-state. Therefore
Theorem~\ref{thm_main2} can be restated as follows: under the conditions of the theorem, all
self-similar actions for $(G,\phi)$ are finite-state if and only if at least one of them (and hence
every) is contracting.




\begin{proof}[Proof of sufficiency in Theorem~\ref{thm_main1}]
The assumption on the Jordan normal form of $\phi$ implies the following crucial property: for every
$g\in L$ the sequence $\phi^n(g)$ is bounded (i.e., belongs to a compact set).

It can be shown, using induction on the length of lower central series of $L$ that
$[G:H]=\det\phiL^{-1}$.

The Lie algebra $\mathscr{L}$ decomposes in the direct sum
$\mathscr{L}=\mathscr{L}_c\oplus\mathscr{L}_r$, where $\mathscr{L}_c$ is a $\phi$-invariant subalgebra
such that $\phi|_{\mathscr{L}_c}$ has spectral radius less than $1$ (contracting), and the spectrum of
$\phi|_{\mathscr{L}/ \mathscr{L}_c}$ consists only of numbers of modulus~$1$. Consider the
$\phi$-invariant subgroup $L_c=\exp(\mathscr{L}_c)$ of the Lie group $L$ that corresponds to the
subalgebra $\mathscr{L}_c$. One can define $L_c$ directly as the subgroup of all $g\in L$ such that
$\phi^n(g)\rightarrow e$ as $n\rightarrow\infty$. Define the group $G_c=G\cap L_c$ and its subgroup
$H_c=H\cap L_c=\phi^{-1}(G_c)< G_c$ of finite index $[G_c:H_c]=\det(\phi|_{\mathscr{L}_c})^{-1}$.


Notice that $\det (\phi|_{\mathscr{L_r}/ \mathscr{L}_c})$ is positive as since $ \det(\phi)=\det
(\phi|_{\mathscr{L_r}/ \mathscr{L}_c}) \det(\phi|_{\mathscr{L}_c})$, and, at the same time, it is a
product of numbers of modulus $1$. Hence $\det (\phi|_{\mathscr{L_r}/ \mathscr{L}_c})=1$ and we get
$[G:H]=\det(\phi)^{-1}=\det(\phi|_{\mathscr{L}_c})^{-1}=[G_c:H_c]$.

Take any coset representatives $q_1,q_2,\ldots,q_d$ for $H_c$ in $G_c$. Since $H\cap G_c=H_c$ and
$[G:H]=[G_c:H_c]$, the elements $q_1,q_2,\ldots,q_d$ are also coset representatives of $H$ in $G$. Let
us consider the associated self-similar action $(G,X^{*})$ given by
Equation~(\ref{eq_action_given_by_virt_end}). Take any element $g\in G$ and for every word
$x_1x_2\ldots x_n\in X^{*}$ consider the state
\begin{eqnarray}\label{eqn_proof_suf_g_x1x2=}
g|_{x_1x_2\ldots x_n}&=&\phi(q^{-1}_{y_n}\ldots \phi(q^{-1}_{y_2} \phi(q^{-1}_{y_1} g q_{x_1})
q_{x_2})  \ldots q_{x_n})\\ &=& \phi(q^{-1}_{y_n}) \ldots \phi^{n-1}(q^{-1}_{y_2})
\phi^n(q^{-1}_{y_1})\ \phi^n(g)\ \phi^{n}(q_{x_1}) \phi^{n-1} (q_{x_2}) \ldots \phi(q_{x_n}),\nonumber
\end{eqnarray}
where $y_1y_2\ldots y_n=g(x_1x_2\ldots x_n)$. The sequence $\phi^n(g)$ is bounded in $L$. The elements
$q_{x_1},q_{x_2},\ldots, q_{x_n}$ are taken from a finite subset of the $\phi$-invariant subgroup
$L_c$. Then the set of all products of the form $\phi^{n}(q_{x_1}) \phi^{n-1} (q_{x_2}) \ldots
\phi(q_{x_n})$ belongs to a compact subset of $L_c$ (see the proof of
Proposition~\ref{prop_contracting}). Since the product in (\ref{eqn_proof_suf_g_x1x2=}) belongs to the
lattice $G$, it assumes a finite number of values. Hence the action $(G,X^{*})$ is finite-state.
\end{proof}



\begin{proof}[Proof of necessity in Theorem~\ref{thm_main1}]
Let $(G,X^{*})$ be a finite-state self-similar action with virtual endomorphism $\phi$ associated to
the letter $x_1\in X$, i.e., $\phi=\phi_{x_1}$ and $H=St_G(x_1)$. Let $\{q_1=e,q_2,\ldots,q_d\}$ be the
corresponding set of coset representatives.

\begin{lemma}
The eigenvalues of $\phiL$ have modulus $\leq 1$. Moreover, every eigenvalue of modulus $1$ is a root
of unity.
\end{lemma}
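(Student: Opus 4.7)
The plan is to work in the Mal'cev completion $L$, project the explicit state formula \eqref{eqn_proof_suf_g_x1x2=} to the abelianized Lie algebra, and derive spectral constraints from the finite-state dynamics.

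First, I would reduce to the induced linear map $\bar\phi$ on $\bar{\mathscr{L}} := \mathscr{L}/[\mathscr{L},\mathscr{L}]$. Since $[V_\mu, V_\nu] \subset V_{\mu\nu}$ for generalized eigenspaces of $\phi$, and the lower central series exhibits $\mathscr{L}$ as built from abelian subquotients, every eigenvalue of $\phi$ on $\mathscr{L}$ arises as a product $\mu_{i_1}\cdots\mu_{i_k}$ of eigenvalues of $\bar\phi$. Hence if each eigenvalue of $\bar\phi$ has modulus $\le 1$ and those of modulus $1$ are roots of unity, the same follows for $\phi$ on $\mathscr{L}$. Projecting \eqref{eqn_proof_suf_g_x1x2=} to $\bar L = \bar{\mathscr{L}}$, using $h_{x_1}=h_1=e$, one obtains additively
$$\overline{g|_{x_1^n}} = \bar\phi^{\,n}(\bar g) - \sum_{j=1}^{n}\bar\phi^{\,n-j+1}(\bar h_{y_j}),$$
where $y_1\cdots y_n = g(x_1^n)$. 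The left-hand side lies in a finite subset of the lattice $\bar G \subset \bar L$, hence is bounded. Projecting further to the generalized eigenspace $V_\lambda$ of an eigenvalue $\lambda$ of $\bar\phi$ gives
$$[\overline{g|_{x_1^n}}]_\lambda = \lambda^n \Bigl([\bar g]_\lambda - \sum_{j=1}^{n}\lambda^{1-j}[\bar h_{y_j}]_\lambda\Bigr),$$
up to polynomial-in-$n$ corrections from a nilpotent Jordan part of $\bar\phi|_{V_\lambda}$.

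If $|\lambda|>1$, then since $|\lambda^n|\to\infty$ and the LHS is bounded, the series $\sum_{j\ge 1}\lambda^{1-j}[\bar h_{y_j}]_\lambda$ converges absolutely to $[\bar g]_\lambda$, with modulus bounded by a constant depending only on $|\lambda|$ and the finite set of coset-representative projections. Hence $[\bar g]_\lambda$ is confined to a fixed bounded region of $V_\lambda$ for every $g\in G$ — contradicting that $\bar G$ is a lattice in $\bar L$ whose projection to $V_\lambda$ is unbounded. If $|\lambda|=1$ and $\lambda^p \neq 1$ for every $p\ge 1$, the one-step recurrence $\overline{g|_{x_1^{n+1}}}=\bar\phi(\overline{g|_{x_1^n}}-\bar h_{y_{n+1}})$ projects to an affine dynamic $u_{n+1}=\lambda(u_n-d_{n+1})$ on $V_\lambda$, with $u_n=[\overline{g|_{x_1^n}}]_\lambda$ in a finite set $F$ and $d_n=[\bar h_{y_n}]_\lambda$ in a finite set $D$. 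Every orbit is eventually periodic, giving, on each cycle of length $r$, the algebraic identity $u_0(1-\lambda^r)=-\sum_{i=1}^r\lambda^{r-i+1}d_i$, which (since $\lambda^r\ne 1$) uniquely determines $u_0$ as an algebraic expression in $\lambda$ and the cycle's $d_i$'s. Varying $g\in G$ yields a lattice's worth of distinct initial values $[\bar g]_\lambda$, which must feed into the finitely many cycle-values in $F$. Careful accounting, exploiting that $\lambda$ is algebraic over $\mathbb{Q}$ (as an eigenvalue of the rational operator $\bar\phi$) and that $[\bar G]_\lambda$ is a finitely generated subgroup of $V_\lambda$, shows this is only possible when $\lambda^p=1$ for some $p$, yielding the contradiction.

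The main obstacle will be the second case. The $|\lambda|>1$ argument is clean because absolute convergence yields a uniform bound on $[\bar g]_\lambda$ for all $g$, immediately contradicting lattice unboundedness. For $|\lambda|=1$ no such convergence is available, and one must instead exploit the fine interplay between the finite affine dynamics on $V_\lambda$ and the lattice structure of $\bar G$ — showing that a finite forward orbit of $u\mapsto\lambda(u-d)$ on a lattice projection is incompatible with $\lambda$ being an irrational rotation.
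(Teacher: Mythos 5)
Your reduction to the abelianization and your treatment of the case $|\lambda|>1$ are essentially sound (and in fact give a pleasant alternative to the paper's argument: boundedness of the finitely many states along the fixed path $x_1^n$ forces $[\bar g]_\lambda$ to equal a convergent series $\sum_{j\ge1}\lambda^{1-j}[\bar h_{y_j}]_\lambda$, hence to be uniformly bounded over $g\in G$, contradicting that the lattice projects unboundedly to $V_\lambda$; the Jordan-block ``polynomial corrections'' are best avoided by projecting with a left eigenfunctional $\xi$ satisfying $\xi\circ\phi=\lambda\xi$, $\xi([\mathscr{L},\mathscr{L}])=0$, which is what the paper does). The genuine gap is the case $|\lambda|=1$, $\lambda$ not a root of unity, and it is exactly where you write ``careful accounting \dots shows this is only possible when $\lambda^p=1$'': no such accounting is given, and the setup you restrict yourself to does not force a contradiction. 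Along the fixed path $x_1^\infty$ the orbit $u_{n+1}=\lambda(u_n-d_{n+1})$ is indeed eventually periodic (that is just the finite-state hypothesis), but the cycle identity $u(1-\lambda^r)=-\sum_{i=1}^r\lambda^{r-i+1}d_i$ puts no effective bound on anything: $|1-\lambda^r|$ can be arbitrarily small, so cycle values can be arbitrarily large, and pre-periods can be arbitrarily long, so an element $g$ with huge $[\bar g]_\lambda$ can perfectly well drift into some cycle. Nothing in the eventual-periodicity data, the algebraicity of $\lambda$, or the finite generation of $[\bar G]_\lambda$ visibly rules this out; your closing claim that ``a finite forward orbit of $u\mapsto\lambda(u-d)$ on a lattice projection is incompatible with an irrational rotation'' is not true at this level of generality and is precisely what would have to be proved.

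The missing idea is the one the paper (following Nekrashevych's Theorem~2.12.1) uses: work with the additive functional $\xi$ on all of $L$, note that $d_x=\lambda(\xi(h_x)-\xi(h_{g(x)}))$ satisfies $\sum_{x\in X}d_x=0$ because $g$ permutes the first level, so the average of $\xi(g|_x)$ over all letters is exactly $\lambda\,\xi(g)$; then choose the path \emph{adaptively}: at each step either some letter strictly increases $|\xi|$, or (by the equality case of the triangle inequality) every letter gives $\xi(\text{new state})=\lambda\,\xi(\text{old state})$. Either alternative, iterated from some $g$ with $\xi(g)\neq 0$, produces infinitely many pairwise distinct states, because an irrational rotation of a nonzero value never repeats. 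Your fixed-path scheme discards both ingredients (the averaging identity over all letters and the freedom to choose the next letter), and that is why the modulus-one case cannot be closed as sketched.
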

\begin{proof}
Put $\mathscr{L}^{(0)}=\mathscr{L}$ and let $\mathscr{L}^{(i)}=[\mathscr{L},\mathscr{L}^{(i-1)}]$ be
the lower central series of the Lie algebra $\mathscr{L}$. Since $\phiL$ is an automorphism of
$\mathscr{L}$ it preserves every term $\mathscr{L}^{(i)}$ and induces an automorphism $\varphi_i$ on
the quotient $\mathscr{L}^{(i)} / \mathscr{L}^{(i+1)}$. The spectrum of $\phiL$ is a union of the
spectra of $\varphi_{i}$. At the same time, every linear map $\varphi_i$ is a quotient of the tensor
product $\varphi_{0}\otimes\varphi_{0}\otimes\ldots \otimes\varphi_{0}$ (see
\cite[Theorem~3.1]{book:nilpotent}). Hence it is enough to prove the statement for the
automorphism~$\varphi_{0}$.

Let $\lambda$ be an eigenvalue of $\varphi_0$. Take a basis of $\mathscr{L}/[\mathscr{L},\mathscr{L}]$
in which $\varphi_0$ has Jordan normal form, and consider the coordinate of vectors in this basis that
corresponds to an eigenvector with eigenvalue $\lambda$. There exists a linear map
$\xi:\mathscr{L}\rightarrow \mathbb{C}$ such that $\xi([\mathscr{L},\mathscr{L}])=0$ and
$\xi(\phi(l))=\lambda\xi(l)$ for all $l\in\mathscr{L}$. We compose $\xi$ with the logarithmic map
$\log:L\rightarrow\mathscr{L}$ and denote the composition also by $\xi$. Note that
$\log(g_1g_2)=\log(g_1)+\log(g_2)\bmod{[\srL,\srL]}$. Thus we have a map $\xi:L\rightarrow\mathbb{C}$
such that $\xi(g_1g_2)=\xi(g_1)+\xi(g_2)$ and $\xi(\phi(g))=\lambda\xi(g)$. The rest of the proof is
very similar to the proof of Theorem~2.12.1 in \cite{self_sim_groups}, so we only sketch it here.

Since $G$ is a lattice in $L$, there exists $g\in G$ such that $\xi(g)\neq 0$. Let us consider the
states $g|_v$ for $v\in X^{*}$. By Equation~(\ref{eq_action_given_by_virt_end}) we have
$g|_x=\phi(q_{g(x)}^{-1}g q_x)$ for every $x\in X$. Then
\begin{align*}
&\xi(g|_x)=\lambda\xi(g)+\lambda(\xi(q_x)-\xi(q_{g(x)}))=\lambda\xi(g)+d_x,
\end{align*}
where $d_x=\lambda(\xi(q_x)-\xi(q_{g(x)}))$.

Suppose $|\lambda|>1$. Since $\sum_{x\in X}d_x=0$, it follows that there exits $x_1\in X$ such that
$|\xi(g|_{x_1})|>|\xi(g)|$. Thus we can iteratively construct letters $x_n\in X$ such that
$|\xi(g|_{x_1x_2\ldots x_{n+1}})|>|\xi(g|_{x_1x_2\ldots x_n})|$ for each $n$. Hence $g$ is not
finite-state, contradiction.

Suppose $|\lambda|=1$ and $\lambda$ is not a root of unity. As above there is a sequence of letters
$x_n\in X$ such that for each $n$ either $|\xi(g|_{x_1x_2\ldots x_{n+1}})|>|\xi(g|_{x_1x_2\ldots
x_n})|$ or $\xi(g|_{x_1x_2\ldots x_{n+1}})=\lambda\xi(g|_{x_1x_2\ldots x_n})$. In either case we have a
contradiction with the fact that the action is finite-state.
\end{proof}

It is left to prove that Jordan cells for roots of unity have size $1$. Let $m$ be an integer number
such that $\varepsilon^m=1$ for every root of unity $\varepsilon$ from the spectrum of $\phiL$. Then
the spectrum of $\phiL^m$ consists of $1$ and numbers less than $1$ in modulus. The self-similar action
$(G,X^{*})$ over the alphabet $X$ induces the self-similar action $(G,(X^m)^{*})$ over the alphabet
$X^m$ of words of length $m$ over $X$. Moreover, since the action $(G,X^{*})$ is finite-state then
obviously the action $(G,(X^m)^{*})$ is also finite-state. Note that the virtual endomorphism $\phi_v$
of the action $(G,(X^m)^{*})$ associated to a word $v=x_1\ldots x_m\in X^m$ is the composition
$\phi_{v}=\phi_{x_1}\circ\dots\circ\phi_{x_n}$ of virtual endomorphisms of the action over $X$. In
particular, $\phi_{x_1\ldots x_1}=\phi^m$ and the action $(G,(X^m)^{*})$ corresponds to the pair
$(G,\phi^m)$. If we know that the size of Jordan cells of $\phiL^m$ with eigenvalue $1$ have size $1$,
then the same holds for $\phiL$ for roots of unity. Hence we can assume that all roots of unity in the
spectrum of $\phiL$ are equal to $1$.

Recall that $\mathscr{L}_{\mathbb{Q}}=\mbQ\text{-span}(\log G)$ is a rational Lie subalgebra of $\srL$
by Theorem~\ref{bookthm1} such that $\mathscr{L}_{\mathbb{Q}}\otimes{}\mathbb{R}=\mathscr{L}$ (i.e.,
determines a rational structure on $\srL$). It is easy to see that
$\phiL(\mathscr{L}_\mathbb{Q})\subset\mathscr{L}_\mathbb{Q}$. Indeed, since $H$ is of finite index in
$G$ they are commensurable and hence by Theorem \ref{bookthm2} define the same rational structure on
$\srL$. Thus by Theorem \ref{bookthm1} $\mathscr{L}_{\mathbb{Q}}$ is also equal to the
$\mathbb{Q}$-span of vectors from $\log{(H)}$. Since $\phi(H)\subset{}G$ we have that
$\phiL(\mathscr{L}_{\mathbb{Q}})\subset{}\mathscr{L}_{\mathbb{Q}}$ by Equation~(\ref{commi}).

Suppose there is a Jordan cell of $\phiL$ with eigenvalue $1$ that has size greater than $1$. Then
there exist nonzero vectors $v,u\in\mathscr{L}$ such that $\phiL(v)=v$ and $\phiL(u)=u+v$. Since
$\phiL(\mathscr{L}_{\mathbb{Q}})\subset{}\mathscr{L}_{\mathbb{Q}}$, the vectors $v$ and $u$ can be
chosen in $\srL_\mbQ$.  By Theorem~\ref{bookthm3} the group $G$ contains a subgroup $G_0$ of finite
index such that $\log G_0$ is a lattice in $\mathscr{L}_{\mathbb{Q}}$, i.e., $\log G_0$ is closed under
addition and its span over $\mathbb{Q}$ is equal to $\mathscr{L}_{\mathbb{Q}}$. Multiplying $v$ and $u$
by a suitable integer we can assume that they belong to $\log G_0$, and thus $u+nv\in\log G_0$ for all
$n\in\mathbb{N}$. Consider the element $g=\exp(u)\in G$. We get
$$\phi^n(g)=\phi^n(\exp(u))=\exp(\phiL^n(u))=\exp(u+nv)\in G_0\subset G,$$ and hence $\phi^{n-1}(g)\in \phi^{-1}(G)=H=St_G(x_1)$
for all $n\geq 1$. Then the element $g$ fixes the word $x_1x_1\ldots x_1$ ($n$ times) and has the state
$g|_{x_1x_1\ldots x_1}=\phi^n(g)=\exp(u+nv)$ for all $n\geq 1$. Since all elements $u+nv$ are
different, the element $g$ is not finite-state. We got a contradiction.
\end{proof}

\begin{proof}[Proof of Theorem~\ref{thm_main2}]
If the spectral radius of $\phiL$ is less than $1$, then the action is contracting by
Proposition~\ref{prop_contracting} and thus finite-state.

For the converse, it is sufficient to prove that if the Jordan normal form of $\phiL$ satisfies the
condition in Theorem~\ref{thm_main1} and the spectrum of $\phiL$ contains a root of unity then there
exists a non-finite-state action for $(G,\phi)$. As in the previous proof, we can assume that all roots
of unity from the spectrum are equal to $1$, and we find an element $q\in H$ such that $\phi(q)=q$.
Since the $\phi$-core is trivial, the $\phi|_{Z(H)}$-core is trivial too by
Proposition~\ref{prop_Sidki}. Then by \cite[Theorem~2.12.1]{self_sim_groups} the corresponding linear
map $\widehat{\phi|}_{Z(H)}$ has spectral radius less than $1$. Let us choose a set of coset
representatives $D=\{q_1=e,q_2,\ldots,q_{d'}\}$ for $Z(H)$ in $Z(G)$. If every element $g\in Z(G)$ can
be expressed as a product
\begin{eqnarray}
g&=&q_{i_1} \phi^{-1}( q_{i_2} \phi^{-1}(\ldots \phi^{-1}(q_{i_n})\ldots))=\label{eqn_g_expresses as product}\\
&=&q_{i_1}\phi^{-1}(q_{i_2}) \phi^{-2}(q_{i_3})\ldots \phi^{-n+1}(q_{i_n}) \nonumber
\end{eqnarray}
for $q_{i_j}\in D$, then we take $k>1$ such that the set $D^k=\{q_1^k=e,q_2^k,\ldots q_{d'}^k\}$
consists of coset representatives for $Z(H)$ in $Z(G)$ and replace $D$ by $D^k$. In this case every
product in (\ref{eqn_g_expresses as product}) belongs to a proper subgroup $Z(G)^k$ of $Z(G)$. We
complete $D$ to the set of coset representatives of $H$ in $G$ by elements $q_{d'+1},\ldots, q_d$.
Replace the coset representative $q_1=e$ by the element $q\in H$. Let us prove that the associated
self-similar action of the group $G$ is not finite-state.

Take an element $g\in Z(G)$ that cannot be expressed in the form (\ref{eqn_g_expresses as product}),
and consider the state of $g$ at the word $x_1x_1\ldots x_1$ ($n$ times):
\begin{eqnarray*}
g|_{x_1x_1\ldots x_1}=\phi(q^{-1}_{y_n}\ldots \phi(q^{-1}_{y_2} \phi(q^{-1}_{y_1} g q) q) \ldots
q)=\phi(q^{-1}_{y_n}\ldots \phi(q^{-1}_{y_2} \phi(q^{-1}_{y_1} g))\ldots ) q^n,
\end{eqnarray*}
where $g(x_1x_1\ldots x_1)=y_1y_2\ldots y_n$. All elements $q_{y_i}$ are taken from the set
$\{q,q_2,\ldots ,q_{d'}\}$. The elements $\phi(q^{-1}_{y_j}\ldots \phi(q^{-1}_{y_2} \phi(q^{-1}_{y_1}
g))\ldots)$ with all $q_{y_k}$ from $\{q_2,\ldots,q_{d'}\}$ belong to the center $Z(G)$. We can move
every element $q_{y_i}$ that is equal to $q$ to the right and include in the power $q^n$. Hence we can
write the state as
\begin{equation}\label{eqn_g_q^m}
g|_{x_1x_1\ldots x_1}=\phi(q^{-1}_{y_n}\ldots \phi(q^{-1}_{y_2} \phi(q^{-1}_{y_1} g))\ldots ) q^{m},
\end{equation}
with all $q_{y_i}\in \{e,q_2,\ldots,q_{d'}\}$ (here we replace every $q_{y_i}=q$ by $q_{y_i}=e$), and
$m$ is equal to the number of letters $y_i$ not equal to $x_1$. Since $\widehat{\phi|}_{Z(H)}$ is
contracting, all the products $\phi(q^{-1}_{y_n}) \ldots\phi^{n-1}(q^{-1}_{y_2})
\phi^n(q^{-1}_{y_1})\phi^n(g)$ belong to a compact subset of $L$ (see the proof of
Proposition~\ref{prop_contracting}), but also belong to the lattice $G$. Hence these products assume a
finite number of values. Let us analyze the values of~$m$.

Notice that $g$ cannot stabilize the sequence $x_1x_1\ldots$. Indeed, if $g(x_1)=x_1$ then
$g|_{x_1}=\phi(q^{-1}gq)=\phi(g)\in Z(G)$. Hence, if $g(x_1x_1\ldots)=x_1x_1\ldots$ then $\phi^n(g)\in
Z(G)$ for all $n\geq 1$. It implies that there exists a non-trivial normal $\phi$-invariant subgroup of
$Z(G)$ and we get a contradiction with the faithfulness of the action. Suppose $g$ changes only
finitely many letters in the sequence $x_1x_1\ldots$. Then $g|_{x_1x_1\ldots x_1}$ and
$g|_{x_1x_1\ldots x_1}q^{-m}$ stabilize $x_1x_1\ldots$ for long enough word $x_1x_1\ldots x_1$. At the
same time
\[
g|_{x_1x_1\ldots x_1}q^{-m}=\phi(q^{-1}_{y_n}\ldots \phi(q^{-1}_{y_2} \phi(q^{-1}_{y_1} g))\ldots )\in
Z(G).
\]
and we get that this element should be trivial. However in this case $g$ can be expressed in the form
(\ref{eqn_g_expresses as product}), we get a contradiction with the choice of $g$.

Since $g$ changes infinitely many letters in the sequence $x_1x_1\ldots$, the number $m$ in Equation
(\ref{eqn_g_q^m}) goes to infinity as the length of $x_1x_1\ldots x_1$ goes to infinity.  The elements
$q^m$ are all different, and hence $g$ has infinitely many states.
\end{proof}

\section{Example}\label{Section_Example}

Consider the discrete Heisenberg group
\[
G=\left\{  (x,y,z)=\left(
       \begin{array}{ccc}
         1 & x & z \\
         0 & 1 & y \\
         0 & 0 & 1 \\
       \end{array}
     \right) : x,y,z\in\mathbb{Z}
     \right\},
\]
its subgroup $H=\left\{ (x,2y,2z) : x,y,z\in\mathbb{Z}\right\}$, and the isomorphism $\phi:
H\rightarrow G$ given by $\phi(x,y,z)=(x,y/2,z/2)$. One can directly check that the $\phi$-core is
trivial, and therefore every self-similar action for the pair $(G,\phi)$ is faithful (we can also
notice that $\phi|_{Z(H)}:Z(H)\rightarrow Z(G)$ has spectral radius $\frac{1}{2}$ and thus self-similar
actions for $(Z(G),\phi|_{Z(H)})$ are faithful, and hence the same holds for $(G,\phi)$ by
Proposition~\ref{prop_Sidki}).

The matrix of $\phi$ is diagonal with eigenvalues $1, \frac{1}{2}, \frac{1}{2}$, and Theorems \ref{thm_main1}
and~\ref{thm_main2} imply that there exist both finite-state and non-finite-state self-similar actions for the pair
$(G,\phi)$. First, let us construct a finite-state action. Choose coset representatives $D=\{(0,0,0), (0,1,0), (0,0,1),
(0,1,1)\}$ and consider the associated self-similar action $(G,X^{*})$ over the alphabet $X=\{1,2,3,4\}$ given by
Equation~(\ref{eq_action_given_by_virt_end}). The action of the generators $a=(1,0,0)$ and $b=(0,1,0)$ of the group
satisfies the following recursions:
\begin{align*}
a(1v)&=1a(v) & a(2v)&=4a(v) & a(3v)&=3a(v) & a(4v)&=2(b^{-1}ab)(v)\\
b(1v)&=2v    & b(2v)&=1b(v) & b(3v)&=4v    & b(4v)&=3b(v)
\end{align*}
The elements $a$ and $b$ are finite-state, namely the states of $a$ are $a, b^{-1}ab, b^{-2}ab^2$ and the states of $b$
are $e,b$. Hence the action $(G,X^{*})$ is finite-state.

Let us change the set of coset representatives and choose $D'=\{(1,0,0),\linebreak (0,1,0), (0,0,1), (0,1,1)\}$. Then the
action of the generators of the group $G$ satisfies the recursions
\begin{align*}
a(1v)&=1a(v) & a(2v)&=4a(v) & a(3v)&=3a(v) & a(4v)&=2(b^{-1}ab)(v)\\
b(1v)&=2a(v) & b(2v)&=1(a^{-1}b)(v) & b(3v)&=4v    & b(4v)&=3b(v)
\end{align*}
In order to see that this action is not finite-state, let us look at the action of the element
$c=(0,0,1)=a^{-1}b^{-1}ab$:
\begin{align*}
c(1v)&=3 (a^2b^{-1}a^{-1}b)(v) & c(2v)&=4c(v) & c(3v)&=1a^{-1}(v) & c(4v)&=2c^2(v)
\end{align*}
The element $c^2$ is a state of $c$, and $c^{2n}|_2=c^{3n}$, $c^{2n+1}|_2=c^{3n+1}$, $c^{2n+1}|_4=c^{3n+2}$. Inductively
we get that all powers $c^n$ for $n\in\mathbb{N}$ are the states of $c$ and hence the element $c$ is not finite-state.

\end{document}